\documentclass[11pt]{article}
\usepackage[a4paper,margin=1in]{geometry}
\usepackage{amsmath,amssymb,amsthm,mathtools}
\usepackage{enumitem}
\usepackage{graphicx}
\usepackage{booktabs}
\usepackage[T1]{fontenc}
\usepackage{lmodern}
\usepackage{microtype}
\usepackage[colorlinks=true,linkcolor=blue,citecolor=blue,urlcolor=blue]{hyperref}
\usepackage{authblk}

\newtheorem{theorem}{Theorem}
\newtheorem{lemma}[theorem]{Lemma}
\newtheorem{proposition}[theorem]{Proposition}
\newtheorem{remark}[theorem]{Remark}

\newcommand{\Z}{\mathbb Z}
\newcommand{\Q}{\mathbb Q}
\newcommand{\Ztwo}{\mathbb Z_2}
\newcommand{\Qtwo}{\mathbb Q_2}
\newcommand{\ordtwo}{\nu_2}

\title{The 2-Adic Valuation of the Order of the All-Ones Class in the Sandpile Group of a Square}
\author[1,6]{Turgay Akyar}

\author[1]{Artem Beliakov}

\author[1]{Konstantin Delchev}

\author[2,3]{Nikita Kalinin}

\author[1]{Ernesto Lupercio}

\author[1]{Higinio Serrano}

\author[1]{Mikhail Shkolnikov}

\author[4]{Daniel Tabares}

\author[5]{Nikolai Terekhov}

\affil[1]{Institute of Mathematics and Informatics, Bulgarian Academy of Sciences}

\affil[2]{Guangdong Technion Israel Institute of Technology (GTIIT),
241 Daxue Road, Shantou, Guangdong Province 515603, P.R. China, nikita.kalinin@gtiit.edu.cn}

\affil[3]{Technion-Israel Institute of Technology, Haifa, 32000, Haifa District, Israel, nikita.kalin@technion.ac.il}

\affil[4]{Department of Mathematics, Center for Research and Advanced Studies of the National Polytechnic Institute (CINVESTAV), Unidad Zacatenco, Mexico City, Mexico, danieles-960@hotmail.com}

\affil[5]{Laboratory of Combinatorial and Geometric Structures, Moscow Institute of Physics and Technology, Institutsky lane 9, Dolgoprudny, Moscow region, 141700, Russia, nikolayterek@gmail.com}

\affil[6]{Department of Mathematics, Middle East Technical University, 06800 Ankara, Türkiye}

\begin{document}
\maketitle

\begin{abstract}
Place one grain at every nonsink vertex of the wired $n\times n$ square,
and let $L(n)$ be the order of this operation in the sandpile group.  Thus
$L(n)$ is the least positive $q$ for which $q$ uniform grain layers form an
integral combination of toppling moves.  We prove that, for every $n\ge1$,
\[
\nu_2(L(n))=
\begin{cases}
2,&n=1,\\
1,&n\ge2\text{ even},\\
\nu_2(n+1)+2,&n\ge3\text{ odd}.
\end{cases}
\]
For even squares, this follows from the domino--sandpile results of
Florescu, Morar, Perkinson, Salter, and Xu, completed by a short parity
observation.  For odd squares, a unimodular cyclic basis identifies the
folded cokernel with a quotient by two shifted Chebyshev polynomials and
sends the all-ones class to $1$.  Its order is determined by the constant
part of this polynomial ideal, not just by a determinant.  Two normalized
Euclidean remainders reduce to consecutive Fibonacci polynomials over
$\mathbb F_2$, giving the exact valuation.
\end{abstract}

\section{Introduction: 2-adic phenomena in domino tiling}

Exact powers of $2$ occur throughout planar combinatorics and arithmetic,
often for structural rather than accidental reasons.
Classical Pfaffian formulas count domino tilings by cyclotomic trigonometric
products \cite{Kasteleyn1961,TemperleyFisher1961}.  For the ordinary
$2r\times2r$ square board $R_{2r,2r}$, the number of tilings satisfies
\[
  \mathsf M(R_{2r,2r})=2^r a_r^2,\qquad a_r\ \text{odd},
\]
where we choose $a_r>0$.  This square-board phenomenon was recorded in the
problem of John, Sachs, and Zernitz \cite{JohnSachsZernitz1987}; Pachter
gave a combinatorial proof of its exact $2$-divisibility
\cite{Pachter1997}.  John and Sachs studied the residual sequence $a_r$
modulo powers of $2$, including explicit congruences modulo $64$
\cite{JohnSachs2000}.  Cohn proved that this sequence extends continuously
to $\Ztwo$ \cite{Cohn1999}.  At the extreme, the Aztec
diamond has $\mathsf M(\operatorname{AD}_r)=2^{r(r+1)/2}$
\cite{EKLP1992I,EKLP1992II}.

Jockusch explained square-or-twice-square matching counts for suitable
graphs with rotational symmetry \cite{Jockusch1994}; Ciucu's reflection
factorization extracts powers of two from matching counts
\cite{Ciucu1997}.  Mod-$2$ adjacency kernels provide another way to control
matching divisibility
\cite{BarkleyLiu2021}.  The Temperley correspondence links planar matchings
to spanning trees \cite{KPW2000}, and Kirchhoff's theorem links spanning
trees to sandpile-group orders.  Chebyshev polynomials already enter this
circle through graph determinants: Boesch and Prodinger used them to
derive spanning-tree formulas \cite{BoeschProdinger1986}, and Strehl
expressed rectangular domino counts using resultants \cite{Strehl2001}.

We were inspired most directly by the article of Florescu, Morar,
Perkinson, Salter, and Xu \cite{FlorescuEtAl2015}.  They relate the number
of reflection-symmetric recurrent grid sandpiles to weighted domino counts
and products of Chebyshev values.  More directly, their Section~5 asks for
the order of the all-ones class, relates it to the all-twos class, and
describes it in terms of denominators of a rational toppling script
\cite[Proposition~25]{FlorescuEtAl2015}.  For an even square, their
Corollary~30 says that the all-twos order divides the odd factor $a_r$.
With one additional parity observation this gives exact $2$-primary order
$2$ for the all-ones class.  Inspired by this article, we make that
consequence explicit and develop the question into the square-grid theorem. 

For a nonsingular integral $N\times N$ matrix $A$, write
$\operatorname{coker}A=\Z^N/A\Z^N$.  It is useful to distinguish three
levels of information:
\[
 \nu_2(\det A)
 \quad\longrightarrow\quad
 \operatorname{SNF}_{(2)}(A)
 \quad\longrightarrow\quad
 \nu_2\bigl(\operatorname{ord}([b]\in\operatorname{coker}A)\bigr).
\]
The determinant records a total power of two, the local Smith form records
its distribution, and the last invariant asks where one chosen class $[b]$ lies.
The arrows indicate increasingly refined questions, not that the preceding
invariant determines the next one.  Kuperberg's Kasteleyn cokernels place
the passage from matching determinants to integral matrix equivalence and
Smith forms in a general framework \cite{Kuperberg2002}.  Arithmetic of
individual graph-Jacobian classes also has an established literature:
Lorenzini studies orders of vertex-difference classes, and Shokrieh studies
the monodromy pairing and discrete logarithms
\cite{Lorenzini2000,Shokrieh2010}.

Our focus is the exact $2$-part of the order of the canonical all-ones
class.  For an odd square, the key integral step is a unimodular cyclic
basis for the folded one-dimensional operator.  It identifies the folded
cokernel with a quotient by two shifted Chebyshev polynomials and carries
the chosen vector to the constant polynomial $1$
(Proposition~\ref{prop:cyclic-cokernel}).  We can therefore compute its
order by intersecting the polynomial ideal with $\Ztwo$, a finer operation
than evaluating a resultant.  The specific calculation is unusually
short: after removing two exact powers of $2$, the Euclidean remainders
become consecutive Fibonacci polynomials modulo $2$.  To our knowledge,
the odd-square formula below is not established in the cited literature;
we make no priority claim for the general cokernel or cyclic-basis methods.

\section{The sandpile problem and theorem}

Take the $n\times n$ square of nonsink vertices and wire its boundary to one
sink.  A toppling at a vertex sends one grain in each of the four coordinate
directions; a grain crossing the boundary disappears into the sink.  Two
integer configurations are toppling-equivalent when their difference is an
integral combination of these toppling moves.

The class $\mathbf1$ means adding one grain simultaneously at every nonsink
vertex and then stabilizing.  Its order $L(n)$ is the period of this
operation on recurrent sandpiles: it is the least $q>0$ for which $q$
uniform grain layers are toppling-equivalent to zero.  The theorem below
determines exactly the power of $2$ dividing this period.  Here
$\nu_2(x)$ denotes the exponent of $2$ in a nonzero rational number $x$,
while $\Ztwo$ and $\Qtwo$ denote the $2$-adic integers and numbers.

Let $A_n$ be the reduced Laplacian of the wired $n\times n$ square,
\[
A_n=D_n\otimes I_n+I_n\otimes D_n,
\qquad
D_n=
\begin{pmatrix}
2&-1\\
-1&2&-1\\
&\ddots&\ddots&\ddots\\
&&-1&2&-1\\
&&&-1&2
\end{pmatrix}.
\]
If $z=(z_1,\ldots,z_n)^T$ and $z_0=z_{n+1}=0$, then
\[
 z^TD_nz=\sum_{i=0}^{n}(z_{i+1}-z_i)^2.
\]
Thus $D_n$, and hence its Kronecker sum $A_n$, is positive definite, so
$A_n^{-1}$ exists over $\Q$.
Let $\mathbf 1\in\Z^{n^2}$ be the all-ones vector.  Algebraically, the
sandpile group is
\[
K_n=\Z^{n^2}/A_n\Z^{n^2}.
\]
The operation above represents the class of $\mathbf1$, and $L(n)$ is its
order.  Equivalently, $L(n)$ is the least positive integer $q$ for which
\[
qA_n^{-1}\mathbf 1\in\Z^{n^2}.
\]
Thus $A_n^{-1}\mathbf1$ is the unique rational toppling script for one
uniform layer, and $L(n)$ is the least integer that clears all of its
coordinate denominators.
Taking the largest power of $2$ occurring among the coordinate
denominators gives the exact local reformulation
\[
\ordtwo L(n)=\min\{r\ge0:2^rA_n^{-1}\mathbf1\in\Ztwo^{n^2}\}.
\]
Indeed, if the reduced coordinate denominators are $b_i$, then
$L(n)=\operatorname{lcm}_i b_i$, and both sides of the displayed equality
are $\max_i\nu_2(b_i)$.

\begin{theorem}[Square-grid valuation]\label{thm:main}
For every $n\ge1$,
\[
\boxed{
\ordtwo L(n)=
\begin{cases}
2,&n=1,\\
1,&n\ge2\text{ even},\\
\ordtwo(n+1)+2,&n\ge3\text{ odd}.
\end{cases}}
\]
\end{theorem}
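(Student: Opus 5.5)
The case $n=1$ is immediate: $A_1=(4)$, so $A_1^{-1}\mathbf1=1/4$ and $\ordtwo L(1)=2$. For $n\ge2$ I will work with the local reformulation $\ordtwo L(n)=\min\{r:2^rA_n^{-1}\mathbf1\in\Ztwo^{n^2}\}$ and exploit the symmetry of $\mathbf 1$ under the dihedral group of the square.

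\textbf{Even $n$.} Here I will invoke the results of Florescu et al. Their Corollary~30 says the order of the all-twos class divides the odd factor $a_r$, so $2A_n^{-1}\mathbf1$ has odd denominators and $\ordtwo L(n)\le1$. For the reverse inequality, I will show that $A_n^{-1}\mathbf1\notin\Ztwo^{n^2}$. Suppose $A_nx=\mathbf1$ with $x$ integral. The key is parity: $A_n\equiv$ the adjacency matrix mod $2$, since the diagonal $4$ vanishes. By the $180^\circ$ symmetry and uniqueness, $x$ is invariant under the central reflection. Summing $(A_nx)_v$ over the black vertices of the checkerboard, each vertex value $x_w$ at a white $w$ appears $\deg_{\text{int}}(w)$ times. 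For even $n$ the number of black vertices is $n^2/2$. I would instead pair each vertex $v$ with its image $v'$ under the central symmetry; an adjacency-counting argument mod $2$ over the orbit pairs should show $\sum_v(A_nx)_v$ over a suitable half is even, while it equals a count that is odd. If this does not close cleanly, the fallback is direct: reduce to the one-dimensional fold below, which also works for even $n$.

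\textbf{Odd $n=2m+1$.} I will fold by the $D_4$ symmetry, or at least by the reflections in each axis. Since $A_n=D_n\otimes I+I\otimes D_n$, the solution is $x=\sum$ over the eigenbasis, but integrally I will pass to the cyclic-basis picture announced in the introduction (Proposition~\ref{prop:cyclic-cokernel}). Restricting to vectors symmetric in each coordinate, the folded operator $D'$ on $\Z^{m+1}$ has cokernel $\Z[t]/(f(t))$, where $t$ acts as $2-D'$ and $f$ is a shifted Chebyshev polynomial $U$-type of degree $m+1$. The fold of $A_n$ is then $\Z[s,t]/(f(s),f(t),\,4-s-t)$, i.e.\ $\Z[t]/(f(t),f(4-t))$, with $\mathbf1\mapsto1$. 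I also need to check that folding does not change the order of the class of $\mathbf 1$, since the symmetric solution is the solution. The order is then the positive generator of $(f(t),f(4-t))\cap\Z$, and I need its $2$-adic valuation.

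For this valuation I will work in $\Ztwo[t]$ and run Euclid on $f(t)$ and $g(t)=f(4-t)$. Mod $2$, $g\equiv f$ since $4-t\equiv -t$ and $f$ has a parity. So the first remainder $f-\pm g$ is divisible by $2$ (indeed by $4$), and I normalize: $r_1=(f(t)\mp f(4-t))/4$. I expect that $r_1$ and one further normalized remainder reduce mod $2$ to consecutive Fibonacci polynomials $F_k,F_{k+1}$ over $\mathbb F_2$, which are coprime. Unimodularity mod $2$ then gives $1\in(r_1,r_2)+2\Ztwo[t]$ up to the precise index. Tracking the exact scalars extracted, namely $4$ and then $2^{\ordtwo(n+1)}$ (which arises from $f(2)\sim n+1$ or a derivative evaluation at the symmetric point $t=2$), should give $\ordtwo=\ordtwo(n+1)+2$. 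The main obstacle is this last bookkeeping: proving the exact second power of $2$ extracted is $\ordtwo(n+1)$, and proving a lower bound, that $2^{\ordtwo(n+1)+1}\notin$ the ideal. For the lower bound I would evaluate at a $2$-adic common approximate root, for instance $t=2$, where $f(2)=\pm(m+1)$-type values, to exhibit a ring map $\Ztwo[t]/(f,g)\to\Ztwo/2^{k}$ sending $1\mapsto1$.
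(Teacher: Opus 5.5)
\textbf{Odd case.} Your framework for odd $n$ is the paper's: with $t=2-X$ and $h=m+1$, your ideal $(f(t),f(4-t))$ is exactly $(P_h(X),P_h(-X))$. However, the step that actually yields the formula is only ``expected,'' and the bookkeeping you sketch is wrong.

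The folded matrix has last row $(\ldots,-2,2)$. So $f$ is of the first kind, $f(t)=2T_h(t/2)$, and $f(2)=2$; no $n+1$ appears there.

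More importantly, look at the first remainder $f(4-t)\mp f(t)$, which equals $\pm\bigl(p_h(-X)-(-1)^hp_h(X)\bigr)$. It has leading coefficient $\pm4h$ and content exactly $2^{\ordtwo(h)+2}=2^{\ordtwo(n+1)+1}$, not $4$. Dividing only by $4$ leaves $r_1\equiv0\pmod 2$ whenever $n\equiv3\pmod 4$, so $r_1$ cannot reduce to a Fibonacci polynomial. Your total $2^2\cdot2^{\ordtwo(n+1)}$ happens to be right, but not for the reason you give.

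The correct split is as follows. The whole content $2^{\ordtwo(n+1)+1}$ comes out of the first remainder, giving $R_h$ with $\bar R_h=F_h$. Next write $h=2^bq$ with $q$ odd, and let $\Lambda_h=(-1)^hq^{-1}(X-2h)$ be the division quotient. Then $p_h-\Lambda_hR_h$ is divisible by one more $2$, because $\bar p_h=XF_h$ and $\bar\Lambda_h=X$. Its half $H_h$ satisfies $\bar H_h=F_{h-1}$.

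Proving integrality and these two reductions is the real content of the odd case. The paper derives them from:
\begin{itemize}
\item the explicit coefficients $p_h=\sum_k(-1)^{h+k}\frac{2h}{h+k}\binom{h+k}{2k}X^k$;
\item the congruence $\binom{2a+1}{2c+1}\equiv\binom ac\pmod 2$;
\item the equality $\ordtwo\binom{2M}{2j}=\ordtwo\binom Mj$.
\end{itemize}
None of this appears in your proposal.

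\textbf{Lower bound.} Your proposed lower bound cannot work in general. A ring map $\Ztwo[t]/(f,g)\to\Z/2^k$ sending $1\mapsto1$ requires a common root of $f$ and $g$ modulo $2^k$, and the characteristic of the quotient ring can be much larger than any such $2^k$. For $n=3$ the ring is $\Ztwo[t]/(t^2-2,\,8(t-2))$, in which $1$ has additive order $16$. Yet $t^2\equiv2\pmod4$ has no solution, so there is no ring map even to $\Z/4$.

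The paper needs no separate lower bound. When $f$ has unit leading coefficient, $\Ztwo[X]/(f)$ is a free $\Ztwo$-module, which gives the exact equality $(f,2^kg)\cap\Ztwo=2^k\bigl((f,g)\cap\Ztwo\bigr)$. Coprimality of $F_h$ and $F_{h-1}$ (via an odd resultant) then gives $(R_h,H_h)\cap\Ztwo=\Ztwo$. Hence the constant ideal is exactly $2^{\ordtwo(n+1)+2}\Ztwo$.

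\textbf{Even case.} Your lower bound for even $n$ is not closed either. Every fundamental domain for the $180^\circ$ rotation has $n^2/2$ vertices, an even number. The checkerboard sum only gives a relation among degree-$3$ boundary values that the rotation already pairs up, so no parity contradiction arises.

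The fix is to use the full dihedral symmetry, which uniqueness also provides, at a single central vertex $(r,r)$ with $n=2r$. The right and lower neighbours equal $x_{r,r}$ by the axis reflections, and the left and upper neighbours coincide by the diagonal reflection. So $1=2(x_{r,r}-x_{r-1,r})$ (with $x_{0,1}=0$ if $r=1$), which is impossible in $\Ztwo$.
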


\subsubsection{Example}
The valuation $\nu_2(L(n))$ is the maximum of a much finer entrywise
profile.  Reshape
$A_n^{-1}\mathbf1$ as $W^{(n)}=(w^{(n)}_{ij})$ and define the $2$-adic
denominator depth of an entry by
\[
d^{(n)}_{ij}
=\nu_2\!\left(\operatorname{den}(w^{(n)}_{ij})\right)
=\max\{0,-\nu_2(w^{(n)}_{ij})\}.
\]
Then $\max_{i,j}d^{(n)}_{ij}=\nu_2(L(n))$.  Figure~\ref{fig:n31-depth}
shows the complete profile at $n=31$.  A displayed zero means that the
coordinate has odd denominator; it does not assert that the coordinate or
its ordinary $2$-adic valuation is zero.

\begin{figure}[tbp]
\centering
\includegraphics[width=\linewidth]{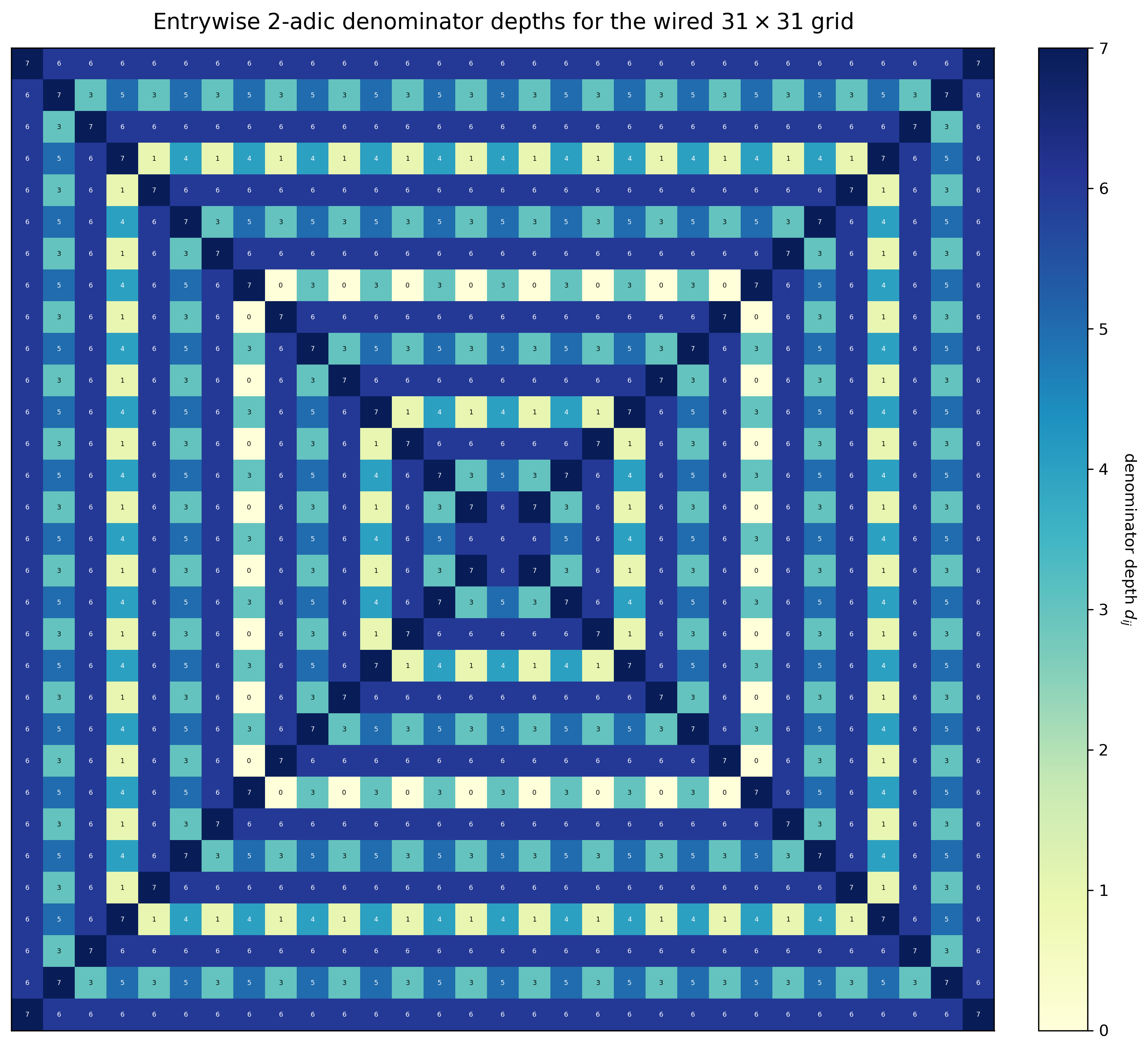}
\caption{The exact entrywise denominator depths $d^{(31)}_{ij}$ for the
wired $31\times31$ square.  Their maximum is $7$, in agreement with
the odd branch of Theorem~\ref{thm:main}, since
$\nu_2(31+1)+2=7$.  The nested pattern is not used in the proof, but
suggests that the scalar order $L(31)$ is the shadow of a finer filtered
structure.}
\label{fig:n31-depth}
\end{figure}

\section{Proof of the square-grid theorem}

We first extract the even-square case from the domino--sandpile
correspondence of Florescu, Morar, Perkinson, Salter, and Xu.  The odd branch
then has four exact reductions: reflection folds the grid to a quarter
square; a unimodular cyclic basis turns its one-dimensional operator into
multiplication by \(X\); taking the cokernel imposes \(Y=-X\); and two
normalized Euclidean remainders reduce modulo \(2\) to consecutive Fibonacci
polynomials.  The first three steps also justify the rectangular
constant-ideal reduction stated above.

\subsection{Even squares from the domino correspondence}

Let $N=2r$ with $r\ge1$.  By
\cite[Proposition~27 and Corollary~30]{FlorescuEtAl2015}, the order of the all-twos class
$[2\mathbf1]$ divides the odd integer $a_r$ in the square-board domino
count $2^r a_r^2$.  Since twice an element of order $L(N)$ has order
$L(N)/\gcd(L(N),2)$, it follows that $\nu_2(L(N))\le1$.

For the reverse inequality, reshape the integral vector
$z=L(N)A_N^{-1}\mathbf1$ as an $N\times N$ array.  Uniqueness of this
solution makes it invariant under all symmetries of the square.  At the
central vertex $(r,r)$, the right and lower neighbours therefore both have
value $z_{r,r}$, while the left and upper neighbours both have value
$z_{r-1,r}$.  For $r=1$, set $z_{0,1}=0$, the sink boundary value.  The
Laplacian equation at this vertex is
\[
L(N)=4z_{r,r}-2z_{r,r}-2z_{r-1,r}
=2(z_{r,r}-z_{r-1,r}).
\]
Thus $L(N)$ is even, proving
\begin{equation}\label{eq:even-square}
\ordtwo L(N)=1
\qquad(N\ge2\text{ even}).
\end{equation}
The upper bound is a consequence of their stated corollary; the central
parity observation supplies the exact valuation.

\subsection{Reflection reduction to a quarter square}

Write
\[
n=2h-1.
\]
Throughout the proof of the odd branch, $h=(n+1)/2\ge2$; the
case $h=1$ is treated separately at the end.
Let
\[
C_h=
\begin{pmatrix}
2&-1&&&\\
-1&2&-1&&\\
&\ddots&\ddots&\ddots&\\
&&-1&2&-1\\
&&&-2&2
\end{pmatrix}.
\]
Put
\[
Q_h=C_h\otimes I_h+I_h\otimes C_h,
\qquad e=(1,\ldots,1)^T\in\Z^h.
\]

\begin{lemma}\label{lem:reflection}
The solution $w=A_n^{-1}\mathbf 1$ is invariant under reflection in the horizontal and vertical symmetry axes. If $u$ is its restriction to the $h\times h$ quarter, then
\[
Q_hu=e\otimes e.
\]
For every integer $k$,
\[
kw\in\Z^{n^2}
\quad\Longleftrightarrow\quad
ku\in\Z^{h^2}.
\]
Consequently, the class of $\mathbf1$ in $\operatorname{coker}_{\Z}A_n$
and the class of $e\otimes e$ in $\operatorname{coker}_{\Z}Q_h$ have the
same order, namely $L(n)$.
\end{lemma}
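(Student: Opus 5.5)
The plan is to use uniqueness of the rational solution together with the symmetry of $A_n$, and then to read off the folded equation at the centre row and column. Index the vertices by $(i,j)$ with $1\le i,j\le n=2h-1$. Let $\rho(i,j)=(n+1-i,j)$ be the reflection in the horizontal axis, and $P_\rho$ the corresponding permutation matrix; treat the vertical reflection the same way. The first step is to check that $P_\rho A_nP_\rho^{-1}=A_n$. This holds because reversing $1,\dots,n$ commutes with $D_n$, and so it commutes with the Kronecker sum as well. Since $P_\rho\mathbf1=\mathbf1$ and $A_n$ is invertible over $\Q$, the vector $P_\rho w$ also solves $A_nx=\mathbf1$, so $P_\rho w=w$. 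The same argument gives invariance under the vertical reflection.

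Next I will derive $Q_hu=e\otimes e$. Take the quarter to be $1\le i,j\le h$, so that the central row and column $i=h$, $j=h$ are included. Fix a vertex $(i,j)$ in the quarter and look at the equation $(A_nw)_{ij}=1$. Its neighbours either lie in the quarter or lie across a centre line. In the second case, row $h+1$ reflects to row $h-1$, and column $h+1$ reflects to column $h-1$. Neighbours outside the square are sink values, which are $0$. In one coordinate direction this says the following. For $i<h$ the row of $D_n$ restricts to the same row of $C_h$. For $i=h$ we get $-w_{h-1}+2w_h-w_{h+1}=-2u_{h-1}+2u_h$, which is exactly the last row $(\dots,-2,2)$ of $C_h$. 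Summing the two directions shows that the Kronecker-sum structure passes to the fold, i.e. $(A_nw)|_{\text{quarter}}=Q_hu$. Hence $Q_hu=e\otimes e$. For $h\ge2$ there is no degenerate case to handle; when $h=1$ the matrix is $C_1=(2)$ with the same convention, but the lemma does not need that case separately.

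The integrality equivalence is then immediate. Every coordinate of $w$ equals some coordinate of $u$ under the folding map, and every coordinate of $u$ is a coordinate of $w$. Hence $kw\in\Z^{n^2}$ if and only if $ku\in\Z^{h^2}$.

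For the final claim, first note that $Q_h$ is nonsingular: $Q_hu=e\otimes e$ alone does not show this. The cleanest route is through eigenvalues. The matrix $C_h$ is similar to the symmetric matrix $S^{-1}C_hS$ with $S=\mathrm{diag}(1,\dots,1,\sqrt2)$, and that matrix is the restriction of $D_n$ to reflection-symmetric vectors. So its eigenvalues are a subset of the positive eigenvalues of $D_n$, and therefore $Q_h$ has positive eigenvalues (sums of pairs). Alternatively, any solution $v$ of $Q_hv=0$ unfolds to a symmetric $\tilde v$ with $A_n\tilde v=0$, so $\tilde v=0$ and hence $v=0$. Once $Q_h$ is invertible, $u=Q_h^{-1}(e\otimes e)$ is the unique rational solution. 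So the order of $[e\otimes e]$ in $\operatorname{coker}_\Z Q_h$ is the least $k>0$ with $ku\in\Z^{h^2}$. By the previous paragraph this equals the least $k$ with $kw\in\Z^{n^2}$, which is $L(n)$. The only delicate step is the bookkeeping at the central row and column: it must produce the coefficient $-2$ in $C_h$, while the diagonal entries stay equal to $2$ rather than picking up an extra term.
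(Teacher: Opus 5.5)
Your proposal is correct and follows essentially the paper's route. Both arguments use uniqueness plus commutation with the reflections for invariance, obtain the $-2$ in the last row of $C_h$ from the fold, read off integrality coordinatewise, and prove $Q_h$ nonsingular by unfolding a kernel vector, as in your alternative argument; the paper packages your coordinatewise check as $D_nJ=JC_h$, tensored to $A_n(J\otimes J)=(J\otimes J)Q_h$. Your eigenvalue argument via $S=\mathrm{diag}(1,\dots,1,\sqrt2)$ is also valid and shows that $Q_h$ has positive spectrum, but the lemma does not need it.
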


\begin{proof}
Let
\[
J(x_1,\dots,x_h)^T
=(x_1,\dots,x_{h-1},x_h,x_{h-1},\dots,x_1)^T.
\]
Then $J$ is an isomorphism from $\Z^h$ onto the reflection-invariant sublattice of $\Z^n$, with inverse given by restriction to the first $h$ coordinates.  The matrix $A_n$ commutes with reflection in either coordinate, while $\mathbf1$ is fixed by both reflections.  Uniqueness of the solution of $A_nw=\mathbf1$ therefore shows that $w$ is invariant under both.  A direct calculation gives
\[
D_nJ=JC_h.
\]
Indeed, at the central coordinate the two reflected neighbours coincide, producing the coefficient $-2$ in the last row of $C_h$.

Tensoring gives
\[
A_n(J\otimes J)
=(J\otimes J)Q_h.
\]
Also $(J\otimes J)(e\otimes e)=\mathbf 1$.  Applying the intertwining
identity to the restriction $u$ of $w$ and using injectivity of
$J\otimes J$ gives $Q_hu=e\otimes e$ and
$w=(J\otimes J)u$.  Since restriction is a left inverse of $J\otimes J$,
$kw$ is integral exactly when $ku$ is integral.

If $Q_hz=0$, then the intertwining identity gives
$A_n(J\otimes J)z=0$.  Since $A_n$ and $J\otimes J$ are
injective, $z=0$.  Thus $Q_h$ is invertible over $\Q$, so the
quarter solution is unique.  Finally, for $k>0$, the relation
$k(e\otimes e)=Q_hy$ has the unique rational solution $y=ku$; similarly,
$k\mathbf1=A_nz$ has the unique rational solution $z=kw$.  The integrality
equivalence therefore proves equality of the two distinguished orders.
\end{proof}

\subsection{A unimodular cyclic basis and the Chebyshev quotient}

This is the point at which Chebyshev polynomials enter the general proof.
Their use in graph characteristic polynomials and spanning-tree
determinants is classical \cite{BoeschProdinger1986}.  What is needed here
in addition is an integral cyclic basis which remembers the all-ones
vector; a diagonalization over a field would not suffice for this purpose.
Define the first-kind polynomials by
\[
T_0(z)=1,\qquad T_1(z)=z,\qquad
T_{j+1}(z)=2zT_j(z)-T_{j-1}(z).
\]
The cosine addition formula gives
$T_j(\cos\theta)=\cos(j\theta)$.  Consequently, any identity between
Chebyshev expressions derived from the trigonometric formulas below holds
as a polynomial identity once it is verified at the infinitely many values
$z=\cos\theta\in[-1,1]$.

For the folded grid it is convenient to introduce the shifted sequence
\[
P_0(X)=2,\qquad P_1(X)=2-X,\qquad
P_{j+1}(X)=(2-X)P_j(X)-P_{j-1}(X).
\]
Comparing the two recurrences gives
\[
P_j(X)=2T_j\!\left(1-\frac X2\right).
\]
The recurrence shows directly that $P_j\in\Z[X]$ and that its leading
coefficient is $(-1)^j$.  For even
$h$ it is monic; for odd $h$ multiplying by $-1$ makes it monic.  This unit
will never matter for the ideals below.

\begin{lemma}\label{lem:cyclic}
The vectors
\[
e,\ C_he,\ C_h^2e,\ldots,C_h^{h-1}e
\]
form a $\Z$-basis of $\Z^h$.
\end{lemma}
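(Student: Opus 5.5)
Rather than computing $C_h^ke$ directly, it is simpler to use the shifted operator $M=2I-C_h$. It differs from $C_h$ by a multiple of the identity, so the $\Z$-span of $e,C_he,\dots,C_h^{k}e$ equals that of $e,Me,\dots,M^{k}e$ for every $k$. This follows from the binomial expansion: $C_h^k=(2I-M)^k$ is an integral combination of $I,M,\dots,M^k$ whose top coefficient is $(-1)^k$, a unit, so the change of basis is unitriangular over $\Z$. It therefore suffices to show that $e,Me,\dots,M^{h-1}e$ is a $\Z$-basis of $\Z^h$.

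Next I would make the action of $M$ explicit. By the definition of $C_h$,
\[
(Mx)_1=x_2,\qquad (Mx)_i=x_{i-1}+x_{i+1}\ (1<i<h),\qquad (Mx)_h=2x_{h-1}.
\]
The key observation is that $M^ke$ has the form
\[
(*,\dots,*,\ c_k,\ 0,\dots,0),
\]
where the last $k$ coordinates (positions $h-k+1,\dots,h$) vanish and the entry in position $h-k$ is a unit. The last coordinate of $e$ is $1$. Applying $M$ to a vector that is constant near the end removes one nonzero position at the end, because the reflecting boundary makes the tail behave as in the infinite path. To see this cleanly I would work with $x=M^ke$ and the unfolded reflection-invariant vector $J x\in\Z^{n}$, on which $M$ acts as the path adjacency operator $D':=2I-D_n$. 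Then $D'^k\mathbf 1$ on the path of length $n=2h-1$ agrees, away from the ends, with the constant $2^k$ for the infinite path.

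That picture does not directly give the triangular structure, so I would instead choose a different reference basis: $f_j=M^j e$ compared with the standard vectors in reverse order. The cleaner route is to order coordinates from the boundary. Near coordinate $1$ (the wired end), $e$ is constant, and $(M^ke)_i=2^k$ for $i>k$ up to the central reflection, while $(M^ke)_1,\dots$ carry boundary corrections. Differences then do the work: $(M-2)$ applied to a vector that is constant on positions $\ge m$ gives a vector supported on positions $\le m+1$. Concretely, I would define $g_k=(M-2I)^k e = (-C_h)^k e$, which again has the same span filtration up to sign. Since $(M-2I)e = -C_h e = -e_1$ (the rows of $C_h$ sum to $1,0,\dots,0$; the last row gives $-2+2=0$), we get $g_1=-e_1$. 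Then $C_h$ applied to $e_j$ produces $-e_{j+1}$ plus terms in $e_{j-1},e_j$. Inductively, $g_k=(-1)^k\,\varepsilon_k e_k + (\text{combination of } e_1,\dots,e_{k-1})$ with $\varepsilon_k=\pm1$; the $e_{k+1}$-coefficient of $C_he_k$ is $-1$ for $k\le h-1$. Hence $e,g_1,\dots,g_{h-1}$ expressed in the basis $e_h$-completed$\dots$ is most simply seen as follows: $g_1,\dots,g_{h-1}$ are unitriangular in $e_1,\dots,e_{h-1}$, and $e$ has $e_h$-coefficient $1$. So the matrix is unitriangular with determinant $\pm1$, and the vectors form a $\Z$-basis.

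The step needing care is checking the entry $(C_h e_k)_{k+1}=-1$ for every $k\le h-1$, including $k=h-1$. This holds because row $h$ of $C_h$ has the entry $-2$ in column $h-1$, and that belongs to $C_he_{h-1}$'s $h$-th coordinate. So the $e_h$-coefficient of $C_he_{h-1}$ is $-2$, not $-1$. This is the main obstacle. I would avoid it by using only $g_1,\dots,g_{h-1}$, which span $e_1,\dots,e_{h-1}$ and use $C_he_k$ only for $k\le h-2$, whose $(k+1)$-entry is $-1$. The vector $e$ then supplies $e_h$ with coefficient $1$. Thus the relevant matrix is unitriangular and the lemma follows.
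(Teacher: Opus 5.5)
Your final argument is correct and is essentially the paper's proof: $g_k=(-C_h)^ke$ is supported in the first $k$ coordinates with $k$th entry $\pm1$ for $k\le h-1$, which uses only the subdiagonal entries $-1$ in columns $1,\dots,h-2$. Together with $e$, whose last entry is $1$, this makes the Krylov matrix triangular with diagonal entries $\pm1$. The detour through $M=2I-C_h$ is unnecessary, and your ``key observation'' about $M^ke$ is false (already $Me=(1,2,\dots,2)^T$ has nonzero last entry). Since nothing depends on either, both should simply be deleted.
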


\begin{proof}
One has
\[
C_he=e_1.
\]
For $1\le k\le h-1$, the vector $C_h^ke$ is supported in the first $k$ coordinates and its $k$th coordinate is $(-1)^{k-1}$. This follows inductively from tridiagonality: for $k\le h-2$, when one multiplies $C_h^ke$ by $C_h$, the only new coordinate is the $(k+1)$st, equal to minus the previous $k$th coordinate. Thus the Krylov matrix with columns
\[
e,C_he,\ldots,C_h^{h-1}e
\]
has determinant $\pm1$: expand along its last row, and the remaining
matrix, whose columns are $C_he,\ldots,C_h^{h-1}e$ restricted to the first
$h-1$ coordinates, is triangular with diagonal entries $\pm1$.
\end{proof}

\begin{lemma}\label{lem:charpoly}
The monic characteristic polynomial of $C_h$ is
\[
p_h(X):=\det(XI_h-C_h)
=2T_h\!\left(\frac{X-2}{2}\right)
=(-1)^hP_h(X).
\]
\end{lemma}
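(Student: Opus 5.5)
We compute $\det(XI_h-C_h)$ by expanding along the first row, which gives a three-term recurrence, and then compare it with the recurrence for $P_j$. The last row of $C_h$ is $(\ldots,-2,2)$, so this matrix is not a plain tridiagonal Toeplitz matrix. We therefore expand from the top, so that the modified corner stays inside the minors.

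Let $M_k$ be the bottom-right $k\times k$ principal submatrix of $XI_h-C_h$. It has the same shape, namely diagonal entries $X-2$, off-diagonal entries $+1$, and the special entry $+2$ in position $(k,k-1)$ of the last row. Set $q_k=\det M_k$. Expanding along the first row gives
\[
q_{k+1}=(X-2)q_k-q_{k-1}\qquad(k\ge2),
\]
because the product of the two off-diagonal entries in the top $2\times2$ corner is $1\cdot1=1$. The initial values are $q_1=X-2$ and
\[
q_2=\det\begin{pmatrix}X-2&1\\2&X-2\end{pmatrix}=(X-2)^2-2 .
\]
The recurrence also holds for $k=1$ if we set $q_0:=2$. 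This is precisely where the $-2$ enters: it turns the initial value $1$, which a Toeplitz matrix would have, into $2$, which is the first-kind normalization.

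Now put $\tilde P_j=(-1)^jP_j$. Then $\tilde P_0=2$ and $\tilde P_1=X-2$, and
\[
\tilde P_{j+1}=(-1)^{j+1}\bigl[(2-X)P_j-P_{j-1}\bigr]=(X-2)\tilde P_j-\tilde P_{j-1}.
\]
This is the same recurrence with the same two initial values $q_0=2$ and $q_1=X-2$. By induction $q_k=\tilde P_k$ for all $k$, so $p_h=q_h=(-1)^hP_h(X)$.

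For the Chebyshev form, we use the identity already noted, $P_h(X)=2T_h(1-X/2)$, together with the parity $T_h(-z)=(-1)^hT_h(z)$. The parity follows from the recurrence by induction, or from $\cos(h(\pi-\theta))=(-1)^h\cos h\theta$ combined with the polynomial-identity principle stated in the paper. Together they give
\[
(-1)^hP_h(X)=2(-1)^hT_h\!\left(1-\frac X2\right)=2T_h\!\left(\frac{X-2}{2}\right).
\]
This is monic of degree $h$, since $T_h$ has leading coefficient $2^{h-1}$, and that is consistent with the statement.

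The only step that needs care is the bookkeeping at the corner. We must expand from the end opposite the $-2$ entry, and we must check the base cases $h=1,2$ directly. For $h=1$, the matrix $C_1$ should be read as the $1\times1$ matrix $(2)$ (the last row alone), so $p_1=X-2=-P_1$. Everything else is a routine induction.
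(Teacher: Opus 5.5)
Your proof is correct and is essentially the paper's argument: expand the continuant from the first row, with the auxiliary value $q_0=2$ absorbing the $-2$ corner, and match initial values and recurrence. The only difference is order. The paper matches the recurrence directly with $2T_j((X-2)/2)$ and then uses parity to reach $(-1)^jP_j$. You match with $(-1)^jP_j$ first and then pass to the Chebyshev form via $P_j=2T_j(1-X/2)$ and parity. You also make explicit the $k=1$ step, $q_2=(X-2)q_1-2$, which the paper leaves implicit.
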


\begin{proof}
Continuant expansion from the first row, with the auxiliary value $p_0=2$,
gives
\[
p_0=2,\qquad p_1=X-2,\qquad
p_j=(X-2)p_{j-1}-p_{j-2}\quad(j\ge2).
\]
The polynomials $2T_j((X-2)/2)$ have the same initial values and recurrence,
so they equal $p_j$.  Finally, $T_j(-z)=(-1)^jT_j(z)$ gives
$p_j=(-1)^jP_j$.
\end{proof}

This polynomial is monic.  By Cayley--Hamilton, $p_h(C_h)=0$, and
Lemma~\ref{lem:cyclic} shows that
\begin{equation}\label{eq:integral-cyclic-map}
\Phi_h:\Z[X]/(p_h(X))\xrightarrow{\sim}\Z^h,
\qquad f(X)\longmapsto f(C_h)e
\end{equation}
is an integral isomorphism.  Indeed, the images of
$1,X,\ldots,X^{h-1}$ are the displayed cyclic basis.  Under $\Phi_h$,
multiplication by $X$ corresponds to $C_h$, while $1$ corresponds to $e$.
The determinant $\pm1$ in Lemma~\ref{lem:cyclic} is essential: it makes
this an isomorphism of lattices, not merely of rational vector spaces, so
no denominator information is lost.

\begin{proposition}[Direct cyclic-cokernel reduction]
\label{prop:cyclic-cokernel}
There is an isomorphism of abelian groups
\begin{equation}\label{eq:integral-cokernel}
\operatorname{coker}_{\Z}Q_h
\xrightarrow{\sim}
\Z[X]/(P_h(X),P_h(-X))
\end{equation}
which sends the class of $e\otimes e$ to the class of $1$.
\end{proposition}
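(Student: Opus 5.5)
We tensor two copies of the integral cyclic isomorphism \eqref{eq:integral-cyclic-map} and then take the cokernel of $Q_h$ in the resulting polynomial model.

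Tensoring $\Phi_h$ with itself over $\Z$ gives an isomorphism of lattices
\[
\Phi_h\otimes\Phi_h:\ \Z[X]/(p_h(X))\otimes_{\Z}\Z[Y]/(p_h(Y))\cong \Z[X,Y]/(p_h(X),p_h(Y))\xrightarrow{\sim}\Z^h\otimes\Z^h=\Z^{h^2},
\]
with $f(X,Y)\mapsto (f(C_h)\otimes\text{---})$ extended bilinearly: the monomial $X^aY^b$ goes to $C_h^ae\otimes C_h^be$. The displayed identification of the tensor product with a quotient of $\Z[X,Y]$ is legitimate because both $p_h(X)$ and $p_h(Y)$ are monic, so the quotient is free with basis $X^aY^b$, $0\le a,b<h$. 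This is also why the map is a lattice isomorphism: Lemma~\ref{lem:cyclic} provides a $\Z$-basis on each factor.

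Under this isomorphism, $1\mapsto e\otimes e$. Multiplication by $X$ corresponds to $C_h\otimes I_h$, and multiplication by $Y$ corresponds to $I_h\otimes C_h$. Hence $Q_h$ corresponds to multiplication by $X+Y$ on $R=\Z[X,Y]/(p_h(X),p_h(Y))$, and
\[
\operatorname{coker}_{\Z}Q_h\cong R/(X+Y)R=\Z[X,Y]/(p_h(X),p_h(Y),X+Y).
\]
The class of $e\otimes e$ is carried to the class of $1$.

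To eliminate $Y$, we use the ring isomorphism $\Z[X,Y]/(X+Y)\cong\Z[X]$ given by $Y\mapsto -X$. This identifies the quotient with $\Z[X]/(p_h(X),p_h(-X))$. Finally, Lemma~\ref{lem:charpoly} gives $p_h=(-1)^hP_h$. Since $\pm1$ are units, the ideal is $(P_h(X),P_h(-X))$, which gives \eqref{eq:integral-cokernel} with $1\mapsto1$.

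The main point needing care is the first step: we must confirm that the tensor product of the two cyclic modules is the stated two-variable quotient, and that the Kronecker sum really corresponds to multiplication by $X+Y$. Both reduce to checking on the monomial basis $X^aY^b$, which the monicity of $p_h$ guarantees. The remaining steps are formal.
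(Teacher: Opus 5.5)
Your proof is correct and follows the paper's own argument. You tensor two copies of $\Phi_h$ to obtain $\Z[X,Y]/(p_h(X),p_h(Y))$, identify $Q_h$ with multiplication by $X+Y$, take the cokernel, substitute $Y=-X$, and use the unit $(-1)^h$ to replace $p_h$ by $P_h$; your note that monicity makes the two-variable quotient free on the monomials $X^aY^b$ just spells out a detail the paper leaves implicit.
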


\begin{proof}
Tensoring two copies of \eqref{eq:integral-cyclic-map} identifies
$\Z^h\otimes\Z^h$ with
\[
\Z[X,Y]/(p_h(X),p_h(Y)).
\]
In these coordinates $Q_h=C_h\otimes I_h+I_h\otimes C_h$ is multiplication
by $X+Y$, and $e\otimes e$ is $1$.  Taking the cokernel therefore gives
\[
\operatorname{coker}_{\Z}Q_h
\simeq
\frac{\Z[X,Y]}{(p_h(X),p_h(Y),X+Y)}
\simeq
\frac{\Z[X]}{(p_h(X),p_h(-X))},
\]
where the last map substitutes $Y=-X$.  Multiplying either generator by
the unit $(-1)^h$ replaces $p_h$ by $P_h$ without changing the ideal.
The distinguished class is $1$ throughout.
\end{proof}

\subsection{The 2-adic constant ideal}

The direct cokernel description makes the local order immediate.  The group
$\operatorname{coker}_{\Z}Q_h$ is finite because $Q_h$ is nonsingular.
For any finite abelian group, tensoring with $\Ztwo$ retains precisely its
$2$-primary component.  Tensoring the presentation in
Proposition~\ref{prop:cyclic-cokernel} therefore gives
\[
(\operatorname{coker}_{\Z}Q_h)\otimes_{\Z}\Ztwo
\simeq
\Ztwo[X]/(P_h(X),P_h(-X)),
\]
and the distinguished class still corresponds to $1$.  Lemma~\ref{lem:reflection}
then yields the exact reduction
\begin{equation}\label{eq:idealreduction}
\boxed{
\ordtwo L(n)
=
\min\{r\ge0:2^r\in(P_h(X),P_h(-X))\subset\Ztwo[X]\}.
}
\end{equation}
Equivalently, the problem is to determine the constant ideal
\[
(P_h(X),P_h(-X))\cap\Ztwo.
\]
Because $p_h=(-1)^hP_h$, this is also
\[
Y(p_h,p_h(-X)),
\qquad
Y(f,g):=(f,g)\cap\Ztwo.
\]

We need only the following two elementary Euclidean moves.

\begin{lemma}[Scaling and stopping]\label{lem:ideal-moves}
Let $f,g\in\Ztwo[X]$.
\begin{enumerate}[label=\textnormal{(\roman*)}]
\item Adding a polynomial multiple of one generator to the other, swapping
the generators, or multiplying a generator by a scalar unit does not change
$Y(f,g)$.
\item If $\deg f\ge1$ and the leading coefficient of $f$ is a unit, then
\[
Y(f,2^kg)=2^kY(f,g)\qquad(k\ge0).
\]
\item If both leading coefficients are units and $\bar f,\bar g$ (the reduction modulo $2$ of $f$ and $g$) are
coprime in $\mathbb F_2[X]$, then $Y(f,g)=\Ztwo$.
\end{enumerate}
\end{lemma}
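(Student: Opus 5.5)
Part (i) is immediate from the definitions; the substance lies in (ii) and (iii). For (i), each listed operation is an invertible transformation of the generating pair over $\Ztwo[X]$. Replacing $g$ by $g+af$ is undone by subtracting $af$, swapping is its own inverse, and multiplying by a unit $c\in\Ztwo^\times$ is undone by multiplying by $c^{-1}$. Hence the ideal $(f,g)$ is unchanged, and so is its intersection $Y(f,g)$ with $\Ztwo$.

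For (ii), the plan is to use division by $f$. Since the leading coefficient of $f$ is a unit, Euclidean division by $f$ works in $\Ztwo[X]$. Every class in $\Ztwo[X]/(f)$ therefore has a unique representative of degree $<\deg f$, and $R=\Ztwo[X]/(f)$ is a free $\Ztwo$-module with basis $1,X,\dots,X^{\deg f-1}$. Because $\deg f\ge1$, the constant $1$ is one of these basis vectors. We have $Y(f,g)=\{c\in\Ztwo: c\in \bar g R\}$, where $\bar g$ is the image of $g$ in $R$.
\begin{itemize}
\item[$\supseteq$] If $c\in Y(f,g)$, then $c=af+bg$, and so $2^kc=af\cdot 2^k+b\cdot 2^kg\in(f,2^kg)$.
\item[$\subseteq$] Suppose $c\in\Ztwo$ and $c\equiv 2^k b g \pmod f$. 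Reduce $bg$ modulo $f$ to a representative $s$ of degree $<\deg f$. Uniqueness of the representative of the constant $c$ then forces $c=2^k s$ as polynomials. So $s$ is a constant $s_0$ with $c=2^ks_0$, and $s_0\in Y(f,g)$ because $s\equiv bg\pmod f$.
\end{itemize}
The one point needing care is that $R$ is torsion-free, so that comparing representatives is legitimate. Freeness of $R$ over $\Ztwo$ gives this.

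For (iii), we show $1\in(f,g)$.
\begin{enumerate}
\item Coprimality gives $\bar a,\bar b\in\mathbb F_2[X]$ with $\bar a\bar f+\bar b\bar g=1$. Lifting them yields $af+bg=1-2h$ for some $h\in\Ztwo[X]$.
\item Let $M=\Ztwo[X]/(f,g)$. Since $f$ has unit leading coefficient (assuming $\deg f\ge1$; if $\deg f=0$, then $f$ is a unit and the claim is trivial), $M$ is a quotient of the finitely generated $\Ztwo$-module $\Ztwo[X]/(f)$.
\item Step 1 says $1=2h$ in $M$, so $M=2M$. Nakayama's lemma over the local ring $\Ztwo$ then gives $M=0$, hence $1\in(f,g)$ and $Y(f,g)=\Ztwo$.
\end{enumerate}
Alternatively, $1-2h$ is a unit in the finite free $\Ztwo$-algebra $\Ztwo[X]/(f)$, because $2h$ is topologically nilpotent there; multiplying by its inverse shows $1\in(f,g)$. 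The main obstacle I expect is exactly this finiteness/completeness point. Over $\Ztwo[X]$ itself, $1-2h$ need not be a unit, so the argument must pass to the finite module $\Ztwo[X]/(f)$, and that is where the unit leading coefficient is used.
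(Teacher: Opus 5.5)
Your proof is correct. Parts (i) and (ii) are essentially the paper's proof. For (ii), the paper likewise passes to the free $\Ztwo$-module $\Ztwo[X]/(f)$ with basis $1,X,\dots,X^{\deg f-1}$, compares the constant with $2^k$ times a reduced representative, and cancels $2^k$.

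Part (iii) takes a genuinely different route.
\begin{itemize}
\item The paper normalizes both $f$ and $g$ to be monic and uses their resultant. The resultant lies in $(f,g)\cap\Ztwo$. Because monic polynomials keep their degrees modulo $2$, it reduces to $\operatorname{Res}(\bar f,\bar g)\neq0$. It is therefore an odd scalar, hence a unit.
\item You instead lift a B\'ezout relation to $af+bg=1-2h$. You then kill $M=\Ztwo[X]/(f,g)$ with Nakayama, or equivalently invert $1-2h$ in the $2$-adically complete finite algebra $\Ztwo[X]/(f)$.
\end{itemize}

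Your version needs only one unit leading coefficient, since that alone makes $M$ finitely generated over $\Ztwo$. The resultant argument uses both, so that the Sylvester determinant commutes with reduction modulo $2$. Some such hypothesis is genuinely needed: $f=g=2X-1$ has coprime reductions but $(f,g)\cap\Ztwo=0$.

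In exchange, the paper's version exhibits an explicit constant in the ideal. It needs no commutative algebra beyond the identity $\operatorname{Res}(f,g)\in(f,g)$. This ties in with its subsequent remark that the resultant always lies in the constant ideal, though in general it need not generate it.
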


\begin{proof}
Part (i) is the usual row operation on a two-generated ideal.  For (ii),
normalize $f$ to be monic and work in the free $\Ztwo$-module
$A=\Ztwo[X]/(f)$.  If a scalar $c$ lies in the image of $2^kg$, then
$c\in2^kA$; comparison in the basis $1,X,\ldots,X^{\deg f-1}$ gives
$c=2^kc_0$ with $c_0\in\Ztwo$.  Since $A$ has no $2$-torsion, cancellation
of $2^k$ shows that $c_0$ lies in the image of $g$.  The reverse inclusion
is immediate after multiplying a relation by $2^k$.

For (iii), if either polynomial is constant it is already a unit.
Otherwise normalize both polynomials to be monic.  Their resultant belongs
to $(f,g)\cap\Ztwo$ and reduces to the nonzero resultant of
$\bar f,\bar g$.  It is therefore an odd scalar, hence a unit; so the
constant ideal is all of $\Ztwo$.
\end{proof}

\begin{remark}[Why resultants appear here]
Strehl's resultant treatment of rectangular domino counts
\cite{Strehl2001} and the explicit Chebyshev resultant formulas of
Jacobs, Rayes, and Trevisan \cite{JacobsRayesTrevisan2011} place this
calculation in a familiar determinant framework.  The intersection with
the constants asks a finer question.  The resultant of $P_h(X)$ and
$P_h(-X)$ belongs to their constant ideal, but need not generate it: its
valuation is the exponent of $2$ in the cardinality of the whole finite
quotient, whereas the generator of the constant ideal records the order
of the class $1$.
Thus a resultant supplies an annihilating constant, not necessarily the
smallest possible power of $2$.
\end{remark}

\subsection{Two Euclidean remainders and Fibonacci polynomials}

For a polynomial over \(\Ztwo\), write \(\bar f\) for its reduction modulo
\(2\).  Define the Fibonacci polynomials in \(\mathbb F_2[X]\) by
\[
F_0=0,\qquad F_1=1,\qquad F_j=XF_{j-1}+F_{j-2}\quad(j\ge2).
\]
Induction gives
\begin{equation}\label{eq:fibonacci-coefficients}
F_j(X)=
\sum_{\ell=0}^{\lfloor(j-1)/2\rfloor}
\binom{j-\ell-1}{\ell}X^{j-1-2\ell}.
\end{equation}
The recurrence also gives
\[
(F_j,F_{j-1})=(F_{j-1},F_{j-2})=\cdots=(F_1,F_0)
=\mathbb F_2[X].
\]
Thus consecutive Fibonacci polynomials are coprime.

Fix \(h\ge2\) and put
\[
b=\ordtwo(h),\qquad h=2^bq,\quad q\in\Ztwo^\times,
\qquad s=(-1)^h.
\]
In \(\Qtwo[X]\), define
\begin{equation}\label{eq:two-remainders}
\begin{aligned}
R_h(X)&:=\frac{p_h(-X)-s p_h(X)}{2^{b+2}},\\
\Lambda_h(X)&:=sq^{-1}(X-2h),\\
H_h(X)&:=\frac{p_h(X)-\Lambda_h(X)R_h(X)}2.
\end{aligned}
\end{equation}

\begin{lemma}[The remainder calculation]\label{lem:fibonacci-remainders}
The polynomials \(R_h,H_h\) belong to \(\Ztwo[X]\), have respective
degrees \(h-1,h-2\), and have unit leading coefficients.  Moreover,
\[
\overline{R}_h=F_h,
\qquad
\overline{H}_h=F_{h-1}.
\]
\end{lemma}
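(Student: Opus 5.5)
The plan is to compute every coefficient explicitly and reduce modulo $2$. Lemma~\ref{lem:charpoly} gives $P_h(X)=2T_h(1-X/2)$. Together with the classical expansion of $T_h(1-2u)$, this yields
\[
P_h(X)=\sum_{k=0}^{h}(-1)^k c_k X^k,\qquad c_0=2,\qquad c_k=\frac{2h}{h+k}\binom{h+k}{2k}=\binom{h+k}{2k}+\binom{h+k-1}{2k}\quad(k\ge1).
\]
I would check this identity by verifying the recurrence $P_{j+1}=(2-X)P_j-P_{j-1}$ on the coefficients, using Pascal's rule. Since $p_h=sP_h$, we get
\[
p_h(-X)-s\,p_h(X)=\sum_k c_k\bigl(s-(-1)^k\bigr)X^k=2s\sum_{k\not\equiv h\ (2)}c_kX^k .
\]
Hence $R_h=s\,2^{-b-1}\sum_{k\not\equiv h}c_kX^k$. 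Its top term is $k=h-1$, with $c_{h-1}=2h$, so the leading coefficient of $R_h$ is $sq$, a unit, and $\deg R_h=h-1$.

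For integrality and the reduction $\overline R_h=F_h$, I need to show that for $k\not\equiv h\pmod 2$, that is $h+k$ odd, the quantity $c_k/2^{b+1}$ is a $2$-adic integer congruent modulo $2$ to $\binom{h-\ell-1}{\ell}$, where $k=h-1-2\ell$. This is the coefficient in \eqref{eq:fibonacci-coefficients}. I would write
\[
c_k=\frac{2h}{h+k}\binom{h+k}{2k}=\frac{h}{k}\binom{h+k-1}{2k-1},
\]
so that $c_k/2^{b+1}=q\cdot\frac{1}{2k}\binom{h+k-1}{2k-1}$, with $h+k-1=2h-2-2\ell$ even. The claim then becomes a parity statement about central-type binomials $\binom{2m}{2k-1}/(2k)$ with $m=h-1-\ell$. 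I would attack it with Kummer/Lucas: $\binom{2m}{2j}\equiv\binom{m}{j}$ and $\binom{2m}{2k-1}=\frac{2m}{2k-1}\binom{2m-1}{2k-2}$, reducing to odd multiples of $\frac{m}{k}\binom{2m-1}{2k-2}$ and finally to $\binom{m-k}{\ell}$-type Lucas congruences with $m-k=\ell$. If this direct route becomes tangled, the fallback is the substitution $X=2-(t+t^{-1})$, so that $P_h=t^h+t^{-h}$, and a generating-function comparison modulo $2$ with the Fibonacci recurrence.

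For $H_h$ I would expand
\[
p_h-\Lambda_hR_h=p_h-sq^{-1}(X-2h)R_h
\]
coefficientwise. The degree-$h$ terms cancel because $sq^{-1}\cdot sq=1$, and I would show that the degree-$(h-1)$ terms cancel as well. This cancellation is what forces $\Lambda_h$ to contain the shift $-2h$, which is exactly the subleading correction. Then $H_h$ has degree at most $h-2$. Its integrality, its unit leading coefficient, and $\overline H_h=F_{h-1}$ all follow from one congruence: the coefficient of $X^k$ for $k\equiv h\pmod 2$ in $p_h-XR_h\cdot sq^{-1}$ is congruent to $2\binom{h-\ell-2}{\ell}$ modulo $4$ (after adjusting the index), while the coefficients in the opposite parity class are divisible by $2$ via the $2h\,sq^{-1}R_h$ term. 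Here I expect the main obstacle: it is a modulo-$4$ (not just modulo-$2$) statement about $c_k-q^{-1}\cdot 2^{-b-1}c_{k+1}'$-type combinations. I would first try to derive it from the three-term recurrence for $P_j$, proving the needed congruences by induction on $h$ rather than through closed-form binomials. The leading coefficient then comes from $k=h-2$, where the explicit values $c_{h-2}$ and $c_{h-1}=2h$ can be checked by hand to give a unit.
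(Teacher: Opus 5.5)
Your treatment of $R_h$ is sound and is essentially the paper's argument: closed-form coefficients of $p_h$, an odd denominator, and a Lucas-type reduction. The detour through $c_k=\frac hk\binom{h+k-1}{2k-1}$ is unnecessary. Since $h+k$ is odd, you already have $c_k/2^{b+1}=\frac{q}{h+k}\binom{h+k}{h-k}$ with $h\pm k$ odd, and $\binom{2a+1}{2c+1}\equiv\binom ac\pmod 2$ finishes. The rewrite is also undefined at $k=0$, which lies in the opposite-parity class when $h$ is odd.

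The genuine gap is in $H_h$, which carries the real content of the lemma. There are two problems.

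First, knowing only that the opposite-parity coefficients of $p_h-\Lambda_hR_h$ are divisible by $2$ does not give $\overline H_h=F_{h-1}$. After halving, an odd coefficient could survive in a degree $\not\equiv h\pmod 2$, while $F_{h-1}$ lives only in degrees $\equiv h$. For odd $h$, where $b=0$, your bound gives nothing better than $2$. What actually holds is exact cancellation. By definition $R_h=-s2^{-b-1}p_h^{\mathrm{opp}}$, and the constant term of $\Lambda_h$ is $-s2^{b+1}$, so
\[
2H_h=p_h^{\mathrm{same}}+\frac{X}{2h}\,p_h^{\mathrm{opp}}.
\]
This also disposes of degrees $h$ and $h-1$ at once.

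Second, and decisively, you correctly identify the mod-$4$ congruence for the same-parity coefficients as the target, but you never prove it. Your suggested induction on $h$ via $P_{h+1}=(2-X)P_h-P_{h-1}$ has no visible mechanism. The normalization $1/(2h)=2^{-b-1}q^{-1}$ depends on $\nu_2(h)$ in a way that does not propagate through the three-term recurrence.

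The missing step is a direct computation with the closed form you already have. For $1\le j\le\lfloor h/2\rfloor$ and $M=h-j$, the display above gives
\[
[X^{h-2j}](2H_h)=\frac hM\binom{2M}{2j}-\frac1{2j+1}\binom{2M-2}{2j}
=\frac{2j\,(2M^2+2Mj+2M-2j-1)}{M(2M-1)(2j+1)}\binom{2M}{2j}.
\]
Hence $[X^{h-2j}]H_h=\binom{M-1}{j-1}u_{M,j}$, where
\[
u_{M,j}=\frac{2M^2+2Mj+2M-2j-1}{(2M-1)(2j+1)}\cdot\frac{\binom{2M}{2j}}{\binom Mj}.
\]
This $u_{M,j}$ is a $2$-adic unit because $\nu_2\binom{2M}{2j}=\nu_2\binom Mj$. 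One computation then yields integrality, $\overline H_h=F_{h-1}$ (relabel $\ell=j-1$), and the unit leading coefficient.

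Separately, your proposed hand check of that leading coefficient should combine $c_{h-2}$ with $c_{h-3}$, which comes from $X\cdot[X^{h-3}]R_h$, not with $c_{h-1}$. Done correctly, it gives the $2$-adic unit $(2h^2-5)/3$.
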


\begin{proof}
The continuant recurrence for \(p_h\) gives, by induction,
\begin{equation}\label{eq:ph-explicit}
p_h(X)=
\sum_{k=0}^{h}
(-1)^{h+k}\frac{2h}{h+k}\binom{h+k}{2k}X^k.
\end{equation}
In the numerator defining \(R_h\), the coefficient of \(X^k\) vanishes
when \(k\equiv h\pmod2\).  For
\(0\le j\le\lfloor(h-1)/2\rfloor\), direct substitution in
\eqref{eq:ph-explicit} gives
\begin{equation}\label{eq:Rh-coefficients}
[X^{h-1-2j}]R_h
=
\frac{sq}{2h-1-2j}
\binom{2h-1-2j}{2j+1}.
\end{equation}
The denominator is odd, so these coefficients lie in \(\Ztwo\); at
\(j=0\) the coefficient is \(sq\), a unit.  Over \(\mathbb F_2\),
\[
\binom{2a+1}{2c+1}\equiv\binom ac\pmod2,
\]
as follows by comparing coefficients in
\((1+Z)^{2a+1}=(1+Z)(1+Z^2)^a\).  Formula
\eqref{eq:Rh-coefficients} therefore reduces to
\(\binom{h-j-1}{j}\), and \eqref{eq:fibonacci-coefficients} gives
\(\bar R_h=F_h\).

The top two coefficients of \(p_h\) are \(1\) and \(-2h\).
The polynomial \(R_h\) has leading coefficient \(sq\) and contains only
degrees of parity opposite to \(h\).  Hence ordinary division of \(p_h\)
by \(R_h\) has quotient \(\Lambda_h=sq^{-1}(X-2h)\).  More explicitly,
if \(p_h^{\mathrm{opp}}\) denotes the part of \(p_h\) in degrees of parity
opposite to \(h\), then
\[
p_h^{\mathrm{opp}}=-s2^{b+1}R_h,
\]
which is exactly the product of \(R_h\) with the constant term of
\(\Lambda_h\).  Thus all opposite-parity terms cancel in
\(p_h-\Lambda_hR_h\), as do the terms of degrees \(h\) and \(h-1\).

It remains to inspect degrees \(h-2j\), where
\(1\le j\le\lfloor h/2\rfloor\).  Put \(M=h-j\).  Substitution of
\eqref{eq:ph-explicit} and \eqref{eq:Rh-coefficients}, followed by
cancellation, gives
\begin{equation}\label{eq:Hh-coefficients}
[X^{h-2j}]H_h
=\frac12\left(
\frac hM\binom{2M}{2j}
-\frac1{2j+1}\binom{2M-2}{2j}
\right)
=
\binom{M-1}{j-1}u_{M,j},
\end{equation}
where
\[
u_{M,j}
=
\frac{2M^2+2Mj+2M-2j-1}{(2M-1)(2j+1)}
\frac{\binom{2M}{2j}}{\binom Mj}.
\]
The first fraction is a quotient of odd integers.  Also
\[
\ordtwo((2r)!)=r+\ordtwo(r!)
\]
implies
\(\ordtwo\binom{2M}{2j}=\ordtwo\binom Mj\).
Hence \(u_{M,j}\in\Ztwo^\times\), and so \(u_{M,j}\equiv1\pmod2\).
Formula \eqref{eq:Hh-coefficients} now reduces to
\(\binom{h-j-1}{j-1}\).  Relabeling \(\ell=j-1\) in
\eqref{eq:fibonacci-coefficients} proves
\(\bar H_h=F_{h-1}\).  Its leading coefficient is therefore a unit and
its degree is \(h-2\).
\end{proof}

\begin{proof}[Completion of the odd branch of Theorem~\ref{thm:main}]
The definitions in \eqref{eq:two-remainders} are the two exact Euclidean
relations
\[
p_h(-X)=s p_h(X)+2^{b+2}R_h(X),
\qquad
p_h(X)=\Lambda_h(X)R_h(X)+2H_h(X).
\]
Using Lemma~\ref{lem:ideal-moves} twice and then
Lemma~\ref{lem:fibonacci-remainders}, we obtain
\begin{align*}
Y(p_h,p_h(-X))
&=2^{b+2}Y(p_h,R_h)\\
&=2^{b+3}Y(R_h,H_h)
 =2^{b+3}\Ztwo,
\end{align*}
because \(\bar R_h=F_h\) and \(\bar H_h=F_{h-1}\) are coprime.
Equation~\eqref{eq:idealreduction} therefore gives
\[
\ordtwo L(n)=b+3
=\ordtwo(2h)+2
=\ordtwo(n+1)+2.
\]
This proves the theorem for \(h\ge2\), equivalently for odd \(n\ge3\).
For \(h=1\), the quotient by \(p_1(X)=X-2\) evaluates
\(p_1(-X)\) at \(X=2\), giving \(-4\); hence \(L(1)=4\), as stated.
Together with \eqref{eq:even-square}, this completes the proof of
Theorem~\ref{thm:main}.
\end{proof}

\begin{remark}[Why Fibonacci polynomials appear]
Modulo \(2\), the shifted Chebyshev recurrence becomes the Fibonacci
recurrence.  The involution \(X\mapsto-X\) first removes the terms of one
parity; after the exact powers of \(2\) are divided out, the two surviving
remainders are precisely \(F_h\) and \(F_{h-1}\).  Their coprimality is the
reason the valuation is exhausted after two steps.
\end{remark}

\section{Further directions}

\paragraph{The full group and its distinguished class.}
The proof identifies one distinguished element of a sandpile cokernel.
Can one determine the full $2$-primary Smith form of the same grids and
locate the all-ones class inside that decomposition?  Kuperberg's integral
approach to matching matrices is a natural precedent
\cite{Kuperberg2002}.  Spectral factorizations alone need not answer this
question: equality between valuations of eigenvalues and local Smith
invariants requires additional hypotheses \cite{ElsheikhGiesbrecht2015},
and even a Smith form must be accompanied by the coordinates of the chosen
class.  Maximum element orders in other graph families, such as strongly
regular graphs, provide a comparison \cite{HungYuen2022}, but the exponent
of a group need not be the order of its all-ones element.  Could a
monodromy-pairing calculation \cite{Shokrieh2010} detect the latter
directly, or explain the finer entrywise pattern in
Figure~\ref{fig:n31-depth}?

\paragraph{Interpolation and graph towers.}
Cohn's theorem suggests asking whether the odd part
$L(n)/2^{\nu_2(L(n))}$ has a continuous or analytic interpolation on
suitable residue classes in $\Ztwo$.  Graph analogues of Iwasawa theory
describe $p$-primary growth of entire graph Jacobians in voltage
$p$-towers and $\Z_\ell^d$-towers
\cite{Gonet2022,DuBoseVallieres2023}.  The wired squares here are not
supplied with such a tower structure, so those results do not provide
this interpolation.  Is there a compatible family of graphs and maps
which retains a canonical class analogous to the all-ones class, and for
which its order can be studied alongside the order of the whole group?

\paragraph{A matching interpretation of the period.}
Does the denominator of $A_n^{-1}\mathbf1$ have a direct matching or
Pfaffian interpretation extending the even-square argument?  Beyond the
determinant correspondence, Kuperberg studies the associated cokernels
\cite{Kuperberg2002}.  For the planar bipartite graphs obtained by the
Kenyon--Propp--Wilson construction, Taylor constructs a simply transitive
action of the Kasteleyn cokernel on perfect matchings
\cite{KPW2000,Taylor2023}.  In that setting, a specified group element has
a concrete action on matchings.  Can the all-ones class be transported to
such an action so that $L(n)$ becomes an orbit length?  Taylor's theorem
is not a statement about every planar bipartite graph, and such a
transport must keep track of the distinguished element.

\paragraph{Reflections and the exceptional prime.}
For genuine graph covers, Reiner and Tseng relate the critical groups of
the cover, the base, and an associated voltage graph; their exact sequence
splits on primary components away from primes dividing the covering
degree \cite{ReinerTseng2014}.  This makes the prime $2$ a natural place
to look for additional structure under an involution.  Our odd-grid
reflections have fixed vertices, however, so the fold is not a regular
double cover and their splitting theorem does not apply directly.  Can
an analogous integral description with fixed vertices explain both the
folded all-ones class and the remaining $2$-primary factors?

\paragraph{Chebyshev arithmetic.}
Chebyshev arithmetic suggests additional number-theoretic questions.  For
example, $T_r$ is irreducible over $\mathbb Q$ exactly when $r$ is a power
of $2$, and Chebyshev maps have rigid dynamical decompositions over $\Ztwo$
\cite{RTW2005,FanLiao2016}.  These facts are not needed by the shortest
proof below, but they show that its dyadic behavior is part of a wider
arithmetic pattern.  For every odd prime $p$ one has the Fermat-type congruence
$T_p(x)\equiv x^p\pmod p$ \cite{Bang1954}, and the composite indices
satisfying the analogous universal congruence have a Korselt-style
characterization \cite{JacobsRayesTrevisan2008}.  Special-value sequences
also exhibit Zsigmondy-type primitive divisors \cite{Baranczuk2019}.  

What makes the argument distinctive is the coexistence of four structures:
reflection acts integrally on the grid, a cyclic vector exposes a one-variable
operator, the involution $X\mapsto-X$ exposes the exact powers of $2$, and
consecutive Fibonacci polynomials stop the normalized Euclidean algorithm.
The even-square valuation comes from the domino correspondence and a parity
obstruction; the odd-square formula is the visible trace of all four
structures above.

\section*{Acknowledgments}
After numerous experiments leading to the right conjectures, the initial proof was obtained by ChatGPT, and then simplified by the authors.

Higinio Serrano, Turgay Akyar, Mikhail Shkolnikov and Ernesto Lupercio are supported by the Simons Foundation, grant SFI-MPS-T-Institutes-00007697, and the Ministry of Education and Science of the Republic of Bulgaria, grant DO1-239/10.12.2024.

\end{document}